\newtheorem{theorem}{Theorem}[section]
\newtheorem{defn}[theorem]{Definition}
\newtheorem{lemma}[theorem]{Lemma}
\newtheorem{eple}[theorem]{Example}
\newtheorem{rmk}[theorem]{Remarks}
\newtheorem{dsc}[theorem]{Discussion}
\newtheorem{nota}[theorem]{Notation}
\newsavebox{\indbin}
\savebox{\indbin}{\begin{picture}(0,0)
\newlength{\gnu}
\settowidth{\gnu}{$\smile$} \setlength{\unitlength}{.5\gnu}
\put(-1,-.65){$\smile$} \put(-.25,.1){$|$}
\end{picture}}
\newcommand{\be}{\begin{enumerate}}
\newcommand{\bd}{\begin{defn}}
\newcommand{\bt}{\begin{theorem}}
\newcommand{\bl}{\begin{lemma}}
\newcommand{\ee}{\end{enumerate}}
\newcommand{\ed}{\end{defn}}
\newcommand{\et}{\end{theorem}}
\newcommand{\el}{\end{lemma}}
\begin{document}
\title{A Nonstandard Approach to Equidistribution}
\author{Tristram de Piro}
\address{Mathematics Department, Harrison Building, Streatham Campus, University of Exeter, North Park Road, Exeter, Devon, EX4 4QF, United Kingdom}
\begin{abstract}
Using nonstandard analysis, we generalise a classical result on equidistributions to integrable functions, and give an application of the Weil conjectures for algebraic curves, to equidistribution in characteristic zero.
\end{abstract}
\maketitle

\begin{lemma}
\label{equidistributed1}

Let $\{x_{n}:n\in\mathcal{N}\}$ be equidistributed on $[0,1)$, then, if $f\in L^{1}([0,1))$, we have that;\\

$\int_{0}^{1}f d\mu=lim_{n\rightarrow \infty}{1\over n}\sum_{j=1}^{n}f(x_{j})$\\

\end{lemma}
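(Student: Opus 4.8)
The plan is to move to a hyperfinite sample of the sequence and to identify the associated normalised counting measure with Lebesgue measure via the Loeb construction. Write $a_n = \frac1n\sum_{j=1}^n f(x_j)$; the conclusion is that $a_n \to I := \int_0^1 f\,d\mu$, which by the nonstandard characterisation of limits is equivalent to the single assertion that ${}^{*}a_N \approx I$ for every infinite $N \in {}^{*}\mathbb{N}$. So I fix such an infinite $N$ and aim to show $\frac1N\sum_{j=1}^N {}^{*}f(x_j)\approx I$.

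For an internal set $A\subseteq {}^{*}[0,1)$ set $\nu_N(A) = \frac1N\#\{j\le N : x_j\in A\}$, an internal finitely additive probability, and let $\widehat\nu_N$ be the corresponding Loeb measure on the Loeb $\sigma$-algebra over ${}^{*}[0,1)$; let $\mathrm{st}\colon {}^{*}[0,1)\to[0,1)$ be the standard part map, which is $\widehat\nu_N$-measurable. Equidistribution says exactly that $\frac1n\#\{j\le n : x_j\in[a,b)\}\to b-a$ for each half-open interval $[a,b)\subseteq[0,1)$, so by transfer $\nu_N({}^{*}[a,b))\approx b-a$ for our infinite $N$. Hence the pushforward $\lambda := \mathrm{st}_{*}\widehat\nu_N$ is a Borel probability measure on $[0,1)$, and by approximating $\mathrm{st}^{-1}[a,b)$ from inside and outside by finite unions of ${}^{*}$-intervals and combining the interval estimate with the countable additivity of the Loeb measure, $\lambda([a,b)) = b-a$ for every half-open interval. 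Since these intervals form a $\pi$-system generating the Borel $\sigma$-algebra, $\lambda = \mu$.

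Now $\frac1N\sum_{j=1}^N{}^{*}f(x_j) = \int_{{}^{*}[0,1)}{}^{*}f\,d\nu_N$ as an internal integral, and the task is to transfer this to $\int f\,d\lambda = \int f\,d\mu = I$. For $f = \mathbf{1}_{[a,b)}$ this is precisely $\nu_N({}^{*}[a,b))\approx b-a$; by linearity it holds for step functions, and then for bounded Borel $f$ by approximation and the dominated convergence theorem for the Loeb integral, using $\lambda = \mu$; in each of these cases ${}^{*}f$ is $S$-$\nu_N$-integrable and its standard part equals $f\circ\mathrm{st}$ $\widehat\nu_N$-a.e., so the change-of-variables formula for Loeb integrals yields $\int{}^{*}f\,d\nu_N\approx\int f\circ\mathrm{st}\,d\widehat\nu_N = \int f\,d\mu$. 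For general $f\in L^{1}([0,1))$ I would pass to the truncations $f_K = (f\wedge K)\vee(-K)$: we have $\int|f-f_K|\,d\mu\to 0$ as $K\to\infty$, the bounded case handles each $f_K$, and it remains to bound $\int|{}^{*}f - {}^{*}f_K|\,d\nu_N$ uniformly in $N$. This is where one must show ${}^{*}f$ is $S$-$\nu_N$-integrable, i.e.\ that $\frac1N\sum_{j\le N}|{}^{*}f(x_j)|\,\mathbf{1}[\,|{}^{*}f(x_j)|>K\,]$ is small, by translating the level-set masses on the nonstandard side into $\mu$-masses via $\lambda=\mu$ and invoking $f\in L^{1}(\mu)$.

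\textbf{Main obstacle.} The genuinely delicate step is this last one: establishing the $S$-$\nu_N$-integrability of ${}^{*}f$, i.e.\ controlling the contribution of large values of $f$ along the hyperfinite sample uniformly in the infinite $N$. Equidistribution is only postulated on intervals, so the argument must first upgrade this to all Borel sets (achieved through the Loeb measure, as above) and then leverage the absolute continuity packaged in ``$f\in L^{1}([0,1))$'' to dominate $f$ on sets of small $\mu$-measure. Once $\int{}^{*}f\,d\nu_N\approx I$ is proved for every infinite $N$, the nonstandard criterion for convergence immediately gives $a_n\to I$, completing the proof.
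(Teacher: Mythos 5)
Your overall architecture is the same as the paper's: extend the sequence internally, form the hyperfinite normalised counting measure, pass to the Loeb space, show the standard part map pushes the Loeb measure forward to Lebesgue measure using equidistribution on intervals, and finish with the nonstandard characterisation of limits. Up to and including $\mathrm{st}_{*}\widehat\nu_N=\mu$ your argument is sound and matches the paper (the paper works with a hyperfinite partition $\mathcal{C}_\eta$ and rational endpoints first, but that is cosmetic).

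The gap is the sentence ``in each of these cases ${}^{*}f$ is $S$-$\nu_N$-integrable and its standard part equals $f\circ\mathrm{st}$ $\widehat\nu_N$-a.e.'' For continuous $f$ this is true, since ${}^{*}f(x)\approx f(\mathrm{st}(x))$ for every nearstandard $x$; for the indicator of an interval it is true off the $\widehat\nu_N$-null set $\mathrm{st}^{-1}\{a,b\}$. But your passage to bounded Borel $f$ ``by approximation and dominated convergence'' does not work: $L^{1}(\mu)$-closeness of $f$ to a step function $s$ gives no control on $\frac1N\sum_{j}|{}^{*}f(x_j)-{}^{*}s(x_j)|$, because $\nu_N$ is concentrated on the sample points, whose standard parts form a $\mu$-null set, and the values of $f$ there are not determined by its $L^{1}$ class. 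Indeed the lifting identity genuinely fails for bounded Borel $f$: take $f=\chi_{E}$ with $E=\{x_n:n\in\mathbb{N}\}$, a bounded Borel function vanishing $\mu$-a.e.; by transfer ${}^{*}f(x_j)=1$ for all $j\le N$, so $\int{}^{*}f\,d\nu_N=1$, while $\int f\circ\mathrm{st}\,d\widehat\nu_N=\widehat\nu_N(\mathrm{st}^{-1}E)=\mu(E)=0$. The same example shows the lemma as literally stated for $L^{1}$ functions cannot be true (replacing $f$ by an a.e.\ equal function changes the right-hand side but not the left), so no argument can close this gap without restricting $f$; the honest scope of the method is Riemann-integrable $f$, i.e.\ $\mu$-a.e.\ continuity, which is exactly the hypothesis under which ${}^{*}f$ is a $\widehat\nu_N$-lifting of $f\circ\mathrm{st}$. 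You located the ``main obstacle'' at the $S$-integrability of ${}^{*}f$ (the tails), but the obstruction already bites for bounded $f$, at the lifting identity. For comparison, the paper attacks the same point by constructing an $S$-integrable $g$ with $g(s_i)={}^{*}f(s_i)$ at every sample point and ${}^{\circ}g=f\circ\mathrm{st}$ a.e.; the example above shows such a $g$ cannot exist in general either, so the difficulty you identified is intrinsic to the statement, not to your particular route.
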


\begin{proof}
We give a nonstandard proof of this result. Choose $\eta\in{^{*}\mathcal N}$ infinite. By transfer, we can find an internal sequence $\{s_{i}:i\in ({^{*}\mathcal N}\cap [1,\eta])\}\subset {^{*}[0,1)}$, with $s_{i}=x_{i}$, for $i\in\mathcal{N}$. We adopt the notation $(\overline{S}_{\eta},{\mathcal{C}}_{\eta})$ of Definition 0.4 in \cite{dep3}, and define $\delta_{\eta}:{\mathcal{C}}_{\eta}\rightarrow {^{*}\mathcal{R}}$ by setting;\\

$\delta_{\eta}([{j\over\eta},{j+1\over \eta}))={1\over\eta}{^{*}{Card}}(\{i\in {^{*}\mathcal N}\cap [1,\eta]:s_{i}\in {[{j\over\eta},{j+1\over \eta})}\})$\\

$\delta_{\eta}(V)={^{*}\sum}_{{j\over \eta}\in V}\delta_{\eta}([{j\over\eta},{j+1\over \eta}))$ $(*)$\\

for $0\leq j\leq \eta-1$ and $V\in {\mathcal{C}}_{\eta}$. It is easily verified that $\delta_{\eta}$ is finitely additive, hence $*$-finitely additive. Following \cite{dep4}, we let $(L(\overline{S}_{\eta}),L({\mathcal{C}}_{\eta}),L(\delta_{\eta}))$ denote the associated Loeb space. We claim that the standard part mapping;\\

$st:(L(\overline{S}_{\eta}),L({\mathcal{C}}_{\eta}),L(\delta_{\eta}))\rightarrow ([0,1],\mathcal{B},\mu)$\\

is measurable and measure preserving, $(**)$, where $\mathcal{B}$ denotes the completion of the Borel field on $[0,1]$ and $\mu$ is Lebesgue measure. Observe, for $\{a,b\}\subset ([0,1)\cap \mathcal{Q})$, using $(*)$ and the fact that $[a,b)=\bigcup_{a\eta\leq j<b\eta}[{j\over\eta},{j+1\over \eta})$;\\

$\delta_{\eta}({^{*}[a,b)})={1\over\eta}{^{*}{Card}}\{1\leq i\leq\eta:s_{i}\in {[a,b)}\}$\\

The internal sequence $\{s_{a,b}^{i}\}_{1\leq i\leq \eta}$, defined by $s_{a,b}^{i}={1\over i}{^{*}Card}(\{k\in {^{*}\mathcal{N}}\cap [1,i]:s_{k}\in {^{*}[a,b)}\})$, has the property that $s_{a,b}^{\eta}\simeq b-a$, using Theorem 2.22(i) of \cite{dep4}. Hence $L(\delta_{\eta})({^{*}[a,b)})=b-a$. Now, let $\{c,d\}\subset ([0,1)\cap \mathcal{R})$, and assume that $c\neq 0$, (\footnote{The case $c=0$ can be dealt with, by observing that $[\eta 0]=0$, and taking $c_{u,n}=0$.}). Choose sequences $\{c_{l,n},c_{u,n},d_{l,n},d_{u,n}:n\in\mathcal{Z}_{\geq 1}\}\subset ({^{*}[0,1)}\cap \mathcal{Q})$ such that $c_{u,n}<c<c_{l,n}<d_{l,n}<d<d_{u,n}$, $lim_{n\rightarrow \infty}c_{u,n}=lim_{n\rightarrow \infty}c_{l,n}=c$ and $lim_{n\rightarrow \infty}d_{u,n}=lim_{n\rightarrow \infty}d_{l,n}=d$. We have that $[c_{l,n},d_{l,n})\subset [{[\eta c]\over \eta},{[\eta d]\over \eta})\subset [c_{u,n},d_{u,n})$, for $n\in\mathcal{Z}_{\geq 1}$. Then, using elementary properties of measures, we have that;\\

 $L(\delta_{\eta})([{[\eta c]\over \eta},{[\eta d]\over \eta}))=lim_{n\rightarrow \infty}L(\delta_{\eta})([c_{l,n},d_{l,n}))$\\

 $=lim_{n\rightarrow \infty}L(\delta_{\eta})([c_{u,n},d_{u,n}))$\\

 $=lim_{n\rightarrow \infty}(d_{l,n}-c_{l,n})$\\

 $=lim_{n\rightarrow \infty}(d_{u,n}-c_{u,n})=d-c$\\

 We can now follow Theorem 14 in \cite{A}, to obtain that $L(\delta_{\eta})(st^{-1}([c,d)))=d-c$, and then $(**)$ is shown, using the same proof. For $g\in V(\overline{S}_{\eta})$, and $A\in {\mathcal{C}}_{\eta}$, we let $\int_{A} g d\delta_{\eta}$ be as in Definition 3.9 of \cite{dep4}, and define $S$-integrability, as in Definition 3.17 of \cite{dep4}. Then, we have, by Theorem 3.20 of \cite{dep4}, that, for $g$ $S$-integrable;\\

${^{\circ}\int_{\overline{S}_{\eta}}g d\delta_{\eta}}=\int_{\overline{S}_{\eta}}{^{\circ}g}dL(\delta_{\eta})$, $(***)$\\

If $f\in L^{1}([0,1),\mathcal{B},\mu)$, using the result $(**)$, we must have that $st^{*}(f)\in L^{1}(L(\overline{S}_{\eta}),L({\mathcal{C}}_{\eta}),L(\delta_{\eta}))$. We claim that there exists $g\in SL^{1}(\overline{S}_{\eta})$, (\footnote{Using the notation in \cite{A} for $S$-integrable functions.}), with the property that $g(x_{i})=f(x_{i})$, for $1\leq i\leq \eta$ and ${^{\circ}g}=st^{*}(f)$ a.e $d(L(\delta_{\eta}))$, $(****)$. We follow the case by case proof of Theorem 3.31 in \cite{dep4}. The case when $st^{*}(f)$ is bounded follows by choosing the initial sequence of $\mathcal{C}_{\eta}$-measurable functions $\{f_{n}\}_{n\in\mathcal{N}_{>0}}$ to have the property that $f_{n}(x_{i})=f(x_{i})$, for $1\leq i\leq n$. After extending the sequence $\{f_{n}\}_{n\in\mathcal{N}_{>0}}$ to an internal sequence $\{f_{n}\}_{1\leq n\leq \omega'}$, for some infinite $\omega'$, this property  continues to hold by overflow, quantifying over the internal sequence $\{{^{*}f}(s_{i})\}_{1\leq i\leq min(\omega',\eta)}$. Choosing $\omega\leq \omega'$, as in the proof of Theorem 3.13, we obtain that $f_{\omega}(x_{i})=f(x_{i})$, for $i\in\mathcal{N}$, $(*****)$. For the general case, we can follow the proof, requiring, using $(*****)$, and replacing $\overline{S}_{\eta}$ by $A_{n}$, that the sequence $\{f_{n}\}_{n\in\mathcal{N}_{>0}}$, has the property that $f_{n}(x_{i})=f(x_{i})$, for any $s_{i}\in A_{n}$. Hence, $(****)$ is shown. Then, using $(**),(***),(****)$;\\

${^{\circ}({1\over \eta}{^{*}\sum_{1\leq j\leq \eta}f(s_{j})})}$\\

$={^{\circ}({1\over \eta}{^{*}\sum_{1\leq j\leq \eta}g(s_{j})})}$\\

$={^{\circ}\int_{\overline{S}_{\eta}}g d\delta_{\eta}}$\\

$=\int_{\overline{S}_{\eta}}{^{\circ}g}dL(\delta_{\eta})$\\

$=\int_{\overline{S}_{\eta}}st^{*}(f)dL(\delta_{\eta})=\int_{0}^{1}fd\mu$\\

The lemma then follows, this time using Theorem 2.22(ii) of \cite{dep4}.

\end{proof}

\begin{defn}
\label{equidistributed2}
If $\eta\in{^{*}\mathcal{N}}$ is infinite, we say that an internal sequence $\{s_{i}\}_{1\leq i\leq \eta}\subset {^{*}[0,1)}$ is equidistributed if it corresponds, by transfer, to a standard equidistributed sequence $\{x_{i}\}_{i\in\mathcal{Z}_{\geq 1}}\subset [0,1)$. An internal sequence $\{s_{i}\}_{1\leq i\leq \eta}$ is weakly equidistributed if, for the associated measure $L(\delta_{\eta})$, $L(\delta_{\eta})(a,b)=b-a$, for $\{a,b\}\subset {^{*}[0,1)}$.
\end{defn}

\begin{rmk}
\label{evaluate}
Observe, from the proof of Lemma \ref{equidistributed1},  that equidistributed implies weakly equidistributed, and, if $\{s_{i}\}_{1\leq i\leq \eta}$ is equidistributed or weakly equidistributed, then for any standard $f\in L^{1}([0,1))$, $({1\over \eta}{^{*}\sum_{1\leq j\leq \eta}f(s_{j})})\simeq \int_{0}^{1}f d\mu$.
\end{rmk}

\begin{lemma}
\label{weylscriterion}
If $\eta\in{^{*}\mathcal{N}}$ is infinite,an internal sequence $\{s_{i}\}_{1\leq i\leq \eta}$ is weakly equidistributed iff ${1\over\eta}{^{*}\sum}_{1\leq i\leq \eta}exp_{\eta}(2\pi iks_{i})\simeq 0$, for finite $k\in\overline{\mathcal{Z}}_{\eta,\neq 0}$, (\footnote{We adopt the notation of Definition 0.8 in \cite{dep3}, letting $exp_{\eta}(2\pi ikx)$ denote the ${\mathcal{C}}_{\eta}$-measurable counterpart of ${^{*}exp}(2\pi ikx)$ on ${^{*}[0,1)}$, and $\overline{\mathcal{Z}}_{\eta,\neq 0}=\{k\in{^{*}\mathcal{Z}}:-\eta\leq k\leq \eta-1\}$.}).
\end{lemma}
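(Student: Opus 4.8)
The plan is to prove both implications by linking the Weyl sums to the Loeb measure $L(\delta_{\eta})$, paralleling the classical proof of Weyl's criterion. For the forward direction, suppose $\{s_{i}\}_{1\le i\le\eta}$ is weakly equidistributed and fix a finite $k\in\overline{\mathcal{Z}}_{\eta,\neq 0}$; being finite, $k$ is a standard nonzero integer. I would apply Remark \ref{evaluate} to the standard $L^{1}([0,1))$ functions $\cos(2\pi kx)$ and $\sin(2\pi kx)$, each of which has integral $0$ over $[0,1)$, to obtain ${1\over\eta}{}^{*}\sum_{1\le j\le\eta}{}^{*}exp(2\pi iks_{j})\simeq 0$. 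It then remains to replace ${}^{*}exp$ by $exp_{\eta}$: since $t\mapsto e^{2\pi ikt}$ is Lipschitz with standard constant $2\pi|k|$, and $exp_{\eta}(2\pi iks_{j})$ is the value of ${}^{*}exp(2\pi ik\,\cdot)$ at a grid point within ${1\over\eta}$ of $s_{j}$, the two differ by at most $2\pi|k|/\eta\simeq 0$ uniformly in $j$, so averaging preserves the conclusion.

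For the converse, suppose ${1\over\eta}{}^{*}\sum_{1\le i\le\eta}exp_{\eta}(2\pi iks_{i})\simeq 0$ for every finite $k\in\overline{\mathcal{Z}}_{\eta,\neq 0}$ (the sum being $1$ when $k=0$). The first step is to note, using $(*)$ and that $exp_{\eta}(2\pi ikx)$ is $\mathcal{C}_{\eta}$-simple, that ${1\over\eta}{}^{*}\sum_{1\le i\le\eta}exp_{\eta}(2\pi iks_{i})=\int_{\overline{S}_{\eta}}exp_{\eta}(2\pi ikx)\,d\delta_{\eta}$; so by linearity, for any standard trigonometric polynomial $P(x)=\sum_{|k|\le K}c_{k}e^{2\pi ikx}$ with $\mathcal{C}_{\eta}$-counterpart $P_{\eta}=\sum_{|k|\le K}c_{k}\,exp_{\eta}(2\pi ikx)$, we get $\int_{\overline{S}_{\eta}}P_{\eta}\,d\delta_{\eta}\simeq c_{0}=\int_{0}^{1}P\,d\mu$. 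Since $P_{\eta}$ is bounded by the standard constant $\sum_{|k|\le K}|c_{k}|$ it is $S$-integrable, and ${}^{\circ}P_{\eta}=st^{*}(P)$ on $\overline{S}_{\eta}$ because $P_{\eta}(x)\simeq{}^{*}P(x)\simeq P(st(x))$; so the identity $(***)$ (Theorem 3.20 of \cite{dep4}) yields $\int_{\overline{S}_{\eta}}st^{*}(P)\,dL(\delta_{\eta})=\int_{0}^{1}P\,d\mu$ (both sides standard), for every trigonometric polynomial $P$.

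From here I would run the classical squeeze: given standard $0\le a<b\le 1$ and standard $\varepsilon>0$, trap $\chi_{[a,b)}$ between trigonometric polynomials $P_{1}\le\chi_{[a,b)}\le P_{2}$ on $[0,1)$ with $\int_{0}^{1}(P_{2}-P_{1})\,d\mu<\varepsilon$ (using continuous periodic bumps and Fej\'er approximation). Composing with $st$ gives $st^{*}(P_{1})\le\chi_{st^{-1}([a,b))}\le st^{*}(P_{2})$ on $\overline{S}_{\eta}$, so integrating against $L(\delta_{\eta})$ gives $\int_{0}^{1}P_{1}\,d\mu\le L(\delta_{\eta})(st^{-1}([a,b)))\le\int_{0}^{1}P_{2}\,d\mu$; letting $\varepsilon\to 0$ forces $L(\delta_{\eta})(st^{-1}([a,b)))=b-a$, and shrinking $[a,b)$ to a point shows $L(\delta_{\eta})(st^{-1}(\{t\}))=0$ for every standard $t$. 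Finally, for arbitrary $\{a,b\}\subset{}^{*}[0,1)$, the interval $(a,b)$ of $\overline{S}_{\eta}$ and $st^{-1}((st(a),st(b)))$ have symmetric difference inside the null fibres over $st(a)$ and $st(b)$ (and when $b-a$ is infinitesimal the whole interval lies in a single such fibre), whence $L(\delta_{\eta})(a,b)=L(\delta_{\eta})(st^{-1}((st(a),st(b))))={}^{\circ}(b-a)=b-a$, i.e.\ $\{s_{i}\}_{1\le i\le\eta}$ is weakly equidistributed.

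I expect the converse to be the main obstacle. One controls only the discretised exponentials $exp_{\eta}$, so the argument must be routed through the $S$-integrability of the counterparts $P_{\eta}$ and the bridge $(***)$; one then has to extract from $\int_{\overline{S}_{\eta}}st^{*}(P)\,dL(\delta_{\eta})=\int_{0}^{1}P\,d\mu$ a statement about $L(\delta_{\eta})$ on intervals via the trigonometric-polynomial squeeze, and finally verify that passing from standard intervals back to arbitrary ${}^{*}$-intervals is harmless because the boundary fibres of $st$ are $L(\delta_{\eta})$-null. The forward direction, by contrast, should be essentially immediate from Remark \ref{evaluate} once the uniform infinitesimal gap between $exp_{\eta}$ and ${}^{*}exp$ is absorbed.
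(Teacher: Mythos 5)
Your proof is correct, and the forward direction is essentially the paper's (you route it through Remark \ref{evaluate} applied to $\cos(2\pi kx)$ and $\sin(2\pi kx)$ and then absorb the infinitesimal gap between $exp_{\eta}$ and ${}^{*}exp$; the paper instead integrates $exp_{\eta}$ directly against $\delta_{\eta}$ and passes to the Loeb integral). Your converse, however, takes a genuinely different route. The paper approximates $\chi_{[a,b)}$ by a single smooth periodic $f$, splits off the mean, and expands the discretisation $g_{\eta}$ in the full hyperfinite Fourier series ${}^{*}\sum_{k\in\mathcal{Z}_{\eta,\neq 0}}\hat{g_{\eta}}(k)exp_{\eta}(2\pi ikx)$ of \cite{dep3}: the Weyl hypothesis kills the finitely indexed terms and the coefficient decay $|\hat{g_{\eta}}(k)|\leq H/k^{2}$ kills the infinitely indexed ones, so the whole argument stays at the level of hyperfinite sums and never touches the Loeb measure. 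You instead use only standard trigonometric polynomials (finitely many standard frequencies at a time), push their $\eta$-counterparts through the bridge $(***)$ to get $\int_{\overline{S}_{\eta}}st^{*}(P)\,dL(\delta_{\eta})=\int_{0}^{1}P\,d\mu$, and run the classical two-sided Fej\'er squeeze on the Loeb space. What this buys: you dispense entirely with the hyperfinite Fourier expansion and the $1/k^{2}$ decay estimates, at the cost of needing the Loeb measurability of $st^{-1}([a,b))$ (Theorem 14 of \cite{A}, already invoked in Lemma \ref{equidistributed1}). Your one-sided squeeze also handles cleanly a point the paper glosses over: the step $||f-\chi_{[a,b]}||_{C([0,1))}<\epsilon$ cannot be taken literally for continuous $f$ (the uniform distance from a characteristic function is at least $1/2$), and is repaired by exactly the kind of $L^{1}$ sandwich you use. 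Finally, your reduction of arbitrary ${}^{*}$-intervals to standard ones via the null boundary fibres of $st$ addresses a step the paper leaves implicit in matching Definition \ref{equidistributed2}.
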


\begin{proof}
Suppose that $\{s_{i}\}_{1\leq i\leq \eta}$ is weakly equidistributed, then, using the proof of Lemma \ref{equidistributed1} and Remark \ref{equidistributed2}, we have that, for finite $k\in \mathcal{Z}_{\eta,\neq 0}$, as $exp_{\eta}(2\pi ikx)$ is $S$-integrable;\\

${^{\circ}({1\over \eta}{^{*}\sum_{1\leq j\leq \eta}}exp_{\eta}(2\pi iks_{j}))}$\\

$={^{\circ}\int_{\overline{S}_{\eta}}exp_{\eta}(2\pi ikx) d\delta_{\eta}}$\\

$=\int_{\overline{S}_{\eta}}{^{\circ}exp_{\eta}(2\pi ikx)}dL(\delta_{\eta})$\\

$=\int_{\overline{S}_{\eta}}st^{*}(exp(2\pi ikx))dL(\delta_{\eta})=\int_{0}^{1}exp(2\pi ikx)d\mu=0$\\

Conversely, suppose that ${1\over\eta}{^{*}\sum}_{1\leq i\leq \eta}exp_{\eta}(2\pi iks_{i})\simeq 0$, $(*)$, for finite $k\in\mathcal{Z}_{\eta,\neq 0}$. Let $\{a,b\}\subset [0,1)$, $\epsilon>0$ and choose $f\in C^{\infty}([0,1])$, (\footnote{We let $C^{\infty}([0,1])=\{f\in C[0,1]:\exists g\in C^{\infty}(S^{1}), ang^{*}g=f\}$, where $ang(\theta)=e^{2\pi i\theta}$, for $\theta\in [0,1]$.}), such that $||f-\chi_{[a,b]}||_{C([0,1))}<\epsilon$, $(**)$. Suppose that $f=g+r$, where $r=\int_{0}^{1}fd\mu$, so that $g\in C^{\infty}([0,1])$ and $\int_{0}^{1}g d\mu=0$. Using Lemma 0.9 of \cite{dep3}, we have that;\\

$g_{\eta}(x)={^{*}\sum}_{k\in\mathcal{Z}_{\eta,\neq 0}}\hat{g_{\eta}}(k)exp_{\eta}(2\pi ikx)$, (\footnote{We adopt the notation, in Definition 0.8, for $\{g_{\eta},{\hat{g}}_{\eta},{\overline{Z}}_{\eta}\}$.})\\

Hence, using $(*)$, the fact that $|{1\over\eta}{^{*}\sum}_{1\leq i\leq \eta}exp_{\eta}(2\pi iks_{i})|\leq 1$, and $|\hat{g_{\eta}}(k)|\leq {H\over k^{2}}$, $(***)$, for $k\in{\overline{\mathcal{Z}}}_{\eta}$, where $H\in\mathcal{R}$, (\footnote{For $(***)$, see Lemma 0.16 and Theorem 0.19 of \cite{dep3}.});\\

${1\over\eta}{^{*}\sum}_{1\leq i\leq \eta}g_{\eta}(s_{i})={1\over\eta}{^{*}\sum}_{1\leq i\leq \eta}{^{*}\sum}_{k\in\mathcal{Z}_{\eta,\neq 0}}\hat{g_{\eta}}(k)exp_{\eta}(2\pi iks_{i})$\\

$={^{*}\sum}_{k\in\mathcal{Z}_{\eta,\neq 0}}\hat{g_{\eta}}(k){1\over\eta}{^{*}\sum}_{1\leq i\leq \eta}exp_{\eta}(2\pi iks_{i})\simeq 0$\\

Hence;\\

${1\over\eta}{^{*}\sum}_{1\leq i\leq \eta}f_{\eta}(s_{i})\simeq r$\\

and, using $(**)$;\\

$|{1\over\eta}{^{*}\sum}_{1\leq i\leq \eta}\chi_{[a,b),\eta}(s_{i})-{1\over\eta}{^{*}\sum}_{1\leq i\leq \eta}f_{\eta}(s_{i})|\leq {1\over \eta}\eta\epsilon=\epsilon$\\

Hence, as $|{1\over\eta}{^{*}\sum}_{1\leq i\leq \eta}\chi_{(a,b),\eta}(s_{i})-r|<2\epsilon$ and $|r-(b-a)|<\epsilon$, we have that;\\

$|{1\over\eta}{^{*}\sum}_{1\leq i\leq \eta}\chi_{(a,b),\eta}(s_{i})-(b-a)|<3\epsilon$\\

and, as $\epsilon$ was arbitrary;\\

${1\over\eta}{^{*}\sum}_{1\leq i\leq \eta}\chi_{(a,b),\eta}(s_{i})\simeq (b-a)$\\

It follows that $\{s_{i}\}_{1\leq i\leq \eta}$ is weakly equidistributed.

\end{proof}

\begin{lemma}
\label{polynomials}
Let $p\in \mathcal{R}[x]$ be a standard polynomial of degree $d$, $p(x)=\sum_{l=0}^{d}a_{l}x^{l}$, with $0\leq a_{l}<1$, then;\\

$lim_{q\rightarrow\infty,q\ prime}{[a_{l}q]\over q}=a_{l}$\\

and, if $0\leq a<b<1$ and $f\in L^{1}([0,1))$;\\

$lim_{q\rightarrow\infty,q\ prime}{1\over q}|\{i:1\leq i\leq q, \sum_{l=0}^{d}{[a_{l}q]\over q}i^{l}\ (mod\ 1)\in (a,b)\}|=(b-a)$\\

$\int_{0}^{1}f d\mu=lim_{q\rightarrow\infty,q\ prime}{1\over q}\sum_{i=1}^{q}f(\sum_{l=0}^{d}{[a_{l}q]\over q}i^{l}\ (mod\ 1))$\\

\end{lemma}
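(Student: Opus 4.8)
The plan is to reduce all three assertions to a single nonstandard statement and then feed that statement the machinery already developed. The first assertion is immediate, since $|{[a_{l}q]\over q}-a_{l}|\leq{1\over q}$, and it uses nothing about primality. For the other two, recall that a standard sequence $(t_{q})$ indexed by primes converges to $L$ iff ${}^{*}t_{\eta}\simeq L$ for every infinite prime $\eta\in{}^{*}\mathcal{N}$; so it suffices to fix an infinite prime $\eta$, set $s_{i}=\sum_{l=0}^{d}{[a_{l}\eta]\over\eta}i^{l}\ (\bmod\ 1)$ for $1\leq i\leq\eta$, and prove that this internal sequence is weakly equidistributed in the sense of Definition \ref{equidistributed2}. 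Granting that, Remark \ref{evaluate} gives ${1\over\eta}{}^{*}\sum_{i=1}^{\eta}f(s_{i})\simeq\int_{0}^{1}f\,d\mu$ for every standard $f\in L^{1}([0,1))$; taking $f=\chi_{(a,b)}$ yields the counting statement and a general $f$ yields the averaging statement, after transferring back to the limits over primes $q$.

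To prove weak equidistribution of $\{s_{i}\}_{1\leq i\leq\eta}$ I would invoke Weyl's criterion in the form of Lemma \ref{weylscriterion}: it is enough to check ${1\over\eta}{}^{*}\sum_{i=1}^{\eta}\exp_{\eta}(2\pi iks_{i})\simeq 0$ for each finite $k\in\overline{\mathcal{Z}}_{\eta,\neq 0}$. Each $s_{i}$ lies on the grid $\{j/\eta:0\leq j\leq\eta-1\}$, namely $s_{i}=\big(\sum_{l}[a_{l}\eta]i^{l}\bmod\eta\big)/\eta$, so by the definition of the $\mathcal{C}_{\eta}$-measurable counterpart $\exp_{\eta}(2\pi iks_{i})={}^{*}\exp(2\pi iks_{i})$, and since $k$ is an integer this equals ${}^{*}\exp\!\big(2\pi iQ(i)/\eta\big)$ with $Q(x)=k\sum_{l=0}^{d}[a_{l}\eta]x^{l}\in{}^{*}\mathcal{Z}[x]$. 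Because $\eta$ is prime, $\{1,\dots,\eta\}$ is a complete residue system modulo $\eta$, so the Weyl sum becomes a complete additive character sum over the hyperfinite field ${}^{*}\mathbb{F}_{\eta}$: namely ${1\over\eta}{}^{*}\sum_{x\in{}^{*}\mathbb{F}_{\eta}}\psi_{\eta}(\bar{Q}(x))$, where $\psi_{\eta}$ is the nontrivial additive character $\bar{m}\mapsto{}^{*}\exp(2\pi im/\eta)$ and $\bar{Q}$ is the reduction of $Q$ modulo $\eta$.

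Now I would transfer Weil's estimate for one-variable exponential sums (the standard consequence of the Riemann Hypothesis for curves over finite fields): for a prime $p$ and $g\in\mathbb{F}_{p}[x]$ with $1\leq\deg g<p$, $\big|\sum_{x\in\mathbb{F}_{p}}\exp(2\pi ig(x)/p)\big|\leq(\deg g-1)\sqrt{p}$. Applying the transferred bound with $g=\bar{Q}$ requires checking two non-degeneracy conditions, and this is precisely where primality of $q$, hence of $\eta$, is used. First, "degree $d$" means $a_{d}\neq 0$, so $0<a_{d}<1$ forces $1\leq[a_{d}\eta]\leq\eta-1$; also $k$ is a nonzero finite integer, so $\eta\nmid k$; since $\eta$ is prime, $\eta\nmid k[a_{d}\eta]$, the leading coefficient of $\bar{Q}$ is nonzero, and $\deg\bar{Q}=d$. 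Second, $1\leq d<\eta$, so $\gcd(d,\eta)=1$. Hence $\big|{1\over\eta}{}^{*}\sum_{i=1}^{\eta}\exp_{\eta}(2\pi iks_{i})\big|\leq{d-1\over\sqrt{\eta}}\simeq 0$, Lemma \ref{weylscriterion} applies, and $\{s_{i}\}_{1\leq i\leq\eta}$ is weakly equidistributed, completing the argument.

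The main obstacle I anticipate is the bookkeeping in the middle paragraph: cleanly identifying the Weyl sum with a complete additive character sum over ${}^{*}\mathbb{F}_{\eta}$ for which the transferred Weil bound is legitimate, and in particular verifying that reduction modulo $\eta$ does not drop the degree (which is where "degree $d$" and the primality of $\eta$ are both essential), together with the trivial-but-necessary observation that when $d=1$ the Weil bound reads $0$ and the character sum genuinely vanishes. The surrounding steps — deducing the counting and averaging statements from weak equidistribution via Remark \ref{evaluate}, and converting the infinitesimal statements back into limits over primes $q$ — are routine.
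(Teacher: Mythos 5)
Your proposal is correct and follows essentially the same route as the paper: reduce to weak equidistribution of the hyperfinite sequence $\{{}^{*}p_{\eta}(j)/\eta\}_{1\leq j\leq\eta}$ for an infinite prime $\eta$, verify Weyl's criterion (Lemma \ref{weylscriterion}) by transferring the Weil bound for complete additive character sums over $\mathbb{F}_{q}$ (the paper cites this as Lemma 0.5 of \cite{dep5}), and then recover the standard limits via Remark \ref{evaluate} and underflow. Your explicit check that the leading coefficient $k[a_{d}\eta]$ does not vanish modulo $\eta$, and your handling of negative finite $k$ uniformly rather than by re-enumerating characters, are if anything slightly more careful than the paper's treatment.
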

\begin{proof}
The first claim follows easily from the fact that, for infinite $\eta\in{^{*}\mathcal{N}}$ prime, ${[a_{l}\eta]\over \eta}\simeq a_{l}$, and Theorem 2.2(i) of \cite{dep3}. For $q$ prime in  ${^{*}\mathcal{N}}$, let $t_{l,q}=[q a_{l}]$. We have $a_{l}\simeq {[\eta a_{l}]\over \eta}$, and, therefore, $0\leq t_{l,\eta}<\eta$, for $\eta\in{^{*}\mathcal{N}}$ infinite prime. It follows, using underflow, that $0\leq t_{l,q}<q$, for sufficiently large $q\in\mathcal{N}$ prime, $q\geq N(p)$, $(*)$. For $q\in {^{*}\mathcal{N}}$ prime, let $p_{q}=\sum_{l=0}^{d}t_{l,q}x^{l}$. We now claim that, for infinite $\eta\in{^{*}\mathcal{N}}$ prime, the sequence $\{{^{*}p_{\eta}\over\eta}(j)\}_{1\leq j\leq \eta}$ is weakly equidistributed, $(**)$. By Lemma \ref{weylscriterion}, it is sufficient to show that there exists an infinite $\eta\in{^{*}\mathcal{N}}$, with ${1\over\eta}{^{*}\sum}_{1\leq j\leq \eta}exp_{\eta}(2\pi ik{p_{\eta}\over \eta}(j))\simeq 0$, for $k\in\overline{{\mathcal{Z}}}_{\eta}$, $k$ finite, $(***)$.\\

Let $F_{q}\cong \mathcal{Z}/ q\mathcal{Z}$ denote a finite field with $q$ elements. Using Lemma 0.5 of \cite{dep5}, we have that, for $q\geq N(p)$, $(q,d)=1$, and for $0<k\leq \eta-1$;\\

$|\sum_{0\leq j\leq q-1}e^{2\pi i {k\over q}p_{q}(j)}|\leq (d-1)q^{{1\over 2}}+1$\\

If $\eta\in{^{*}\mathcal{N}}$ is prime, then $(\eta,d)=1$, $\eta\geq N(p)$, and, by transfer, for $0<k\leq \eta-1$;\\

${1\over \eta}|\sum_{0\leq j\leq \eta-1}{^{*}exp}(2\pi i {k\over \eta}{^{*}p_{\eta}}(j))|\leq {(d-1)\over \eta^{1\over 2}}+{1\over \eta}\simeq 0$\\

The characters $\{e^{2\pi i {k\over q}}:-(q-1)\leq k\leq -1\}$ are just a re-enumeration of the characters $\{e^{2\pi i {k\over q}}:1\leq k\leq q-1\}$ on $F_{q}$ for $q$ prime, and, therefore, by the same argument;\\

${1\over \eta}|\sum_{0\leq j\leq \eta-1}{^{*}exp}(2\pi i {k\over \eta}{^{*}p_{\eta}}(j))|\simeq 0$, for $k\in{\mathcal{Z}_{\eta}\setminus \{-\eta,0\}}$\\

As $exp(2\pi ikx)$ is continuous on $[0,1)$, for $k\in\mathcal{Z}$, and;\\
 
$max({1\over \eta}exp_{\eta}(2\pi i {k\over \eta}{^{*}p_{\eta}}(0)),{1\over\eta}(exp_{\eta}2\pi i {k\over \eta}{^{*}p_{\eta}}(\eta)))\simeq 0$\\

we have that;\\

${1\over \eta}|{^{*}\sum}_{1\leq j\leq \eta}exp_{\eta}(2\pi i {k\over \eta}{^{*}p_{\eta}}(j))|\simeq 0$, for finite $k\in\mathcal{Z}_{\eta,\neq 0}$\\

It follows that $(**),(***)$ hold. We have that, for any given $\epsilon>0$ standard, $\eta\in{^{*}\mathcal{N}}$ infinite prime, that;\\

$(b-a)-\epsilon<{1\over\eta}|\{i:1\leq i\leq \eta,\sum_{l=0}^{d}{[a_{l}\eta]\over \eta}i^{l}\ (mod\ 1)\in (a,b)\}|<(b-a)+\epsilon$\\

By underflow, there exists a standard $N(\epsilon,p)$ prime, such that, for all standard primes $q\geq N(\epsilon,p)$;\\

$(b-a)-\epsilon<{1\over q}|\{i:1\leq i\leq q,\sum_{l=0}^{d}{[a_{l}q]\over q}i^{l}\ (mod\ 1)\in (a,b)\}|<(b-a)+\epsilon$\\

hence, the second claim is shown. Using Remarks \ref{evaluate}, for any given $f\in L^{1}([0,1))$, standard $\epsilon>0$, $\eta\in{^{*}\mathcal{N}}$ infinite prime;\\

$\int_{0}^{1}fd\mu-\epsilon<{1\over \eta}\sum_{i=1}^{\eta}f(\sum_{l=0}^{d}{[a_{l}\eta]\over \eta}i^{l}\ (mod\ 1))<\int_{0}^{1}fd\mu+\epsilon$\\

Again, by underflow, there exists a standard $M(\epsilon,p,f)$ prime, such that, for all standard primes $q\geq M(\epsilon,p,f)$;\\

$\int_{0}^{1}fd\mu-\epsilon<{1\over q}\sum_{i=1}^{q}f(\sum_{l=0}^{d}{[a_{l}q]\over q}i^{l}\ (mod\ 1))<\int_{0}^{1}fd\mu+\epsilon$\\

Hence, the final claim is shown.\\
\end{proof}

\begin{defn}
\label{measures}
If $p\in \mathcal{R}[x]$, and $p_{q}$, $q$ prime, are as in Lemma \ref{polynomials}, we define the associated measure $\mu_{p,q}={1\over q}(\delta_{p_{q}(1)}+\ldots+\delta_{p_{q}(i)}+\ldots+\delta_{p_{q}(q)})$, where $\{\delta_{p_{q}(i)}:1\leq i\leq q\}$ are point measures supported at $\{p_{q}(i):1\leq i\leq q\}$.
\end{defn}

\begin{lemma}
\label{convergence}
If $p\in \mathcal{R}[x]$, then the sequence $\{\mu_{p,q}:q\in\mathcal{N},\ q\ prime\}$ converges weakly to Lebesgue measure on $[0,1)$.
\end{lemma}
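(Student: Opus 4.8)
The plan is to unwind the definition of weak convergence and reduce directly to the integral conclusion of Lemma~\ref{polynomials}. Recall that $\mu_{p,q}$ converges weakly to $\mu$ on $[0,1)$ precisely when $\int f\,d\mu_{p,q}\to\int_{0}^{1}f\,d\mu$ for every bounded continuous $f$ on $[0,1)$; by the Portmanteau theorem this is equivalent to $\mu_{p,q}(A)\to\mu(A)$ for every Borel $A$ whose boundary is $\mu$-null, and -- since Lebesgue measure charges no point -- every subinterval of $[0,1)$ is such a continuity set. By Definition~\ref{measures}, $\mu_{p,q}$ is the empirical measure of the $q$ points $\sum_{l=0}^{d}\frac{[a_{l}q]}{q}i^{l}\ (mod\ 1)$, $1\leq i\leq q$; thus $\int f\,d\mu_{p,q}=\frac{1}{q}\sum_{i=1}^{q}f(\sum_{l=0}^{d}\frac{[a_{l}q]}{q}i^{l}\ (mod\ 1))$ for any $f$ on $[0,1)$. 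We may assume $0\leq a_{l}<1$ for every $l$: replacing each $a_{l}$ by its fractional part alters neither $\mu$ nor $\mu_{p,q}$, because $i^{l}\in\mathcal{Z}$, and this places us in the hypotheses of Lemma~\ref{polynomials}.

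The main step is then immediate. Fix $f$ bounded and continuous on $[0,1)$. Since $[0,1)$ carries finite Lebesgue measure and $f$ is bounded, $f\in L^{1}([0,1),\mathcal{B},\mu)$, so the third conclusion of Lemma~\ref{polynomials} applies verbatim and gives $\frac{1}{q}\sum_{i=1}^{q}f(\sum_{l=0}^{d}\frac{[a_{l}q]}{q}i^{l}\ (mod\ 1))\to\int_{0}^{1}f\,d\mu$ as $q\to\infty$ through primes, i.e.\ $\int f\,d\mu_{p,q}\to\int_{0}^{1}f\,d\mu$, which is weak convergence. Alternatively one may use only the second conclusion of Lemma~\ref{polynomials}: it yields $\mu_{p,q}((a,b))\to b-a=\mu((a,b))$ for all $0\leq a<b<1$, whence $\liminf_{q}\mu_{p,q}(G)\geq\mu(G)$ for every open $G\subseteq[0,1)$ after decomposing $G$ into countably many disjoint open subintervals and applying Fatou; this again gives weak convergence by Portmanteau.

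I do not expect a serious obstacle, but the one point needing care is that $[0,1)$ is not compact, so a bounded continuous function on it need not be uniformly continuous, and the naive ``approximate $f$ uniformly by step functions supported on finitely many intervals'' move is unavailable. This is exactly why routing through the already-proven $L^{1}$ statement (the third conclusion of Lemma~\ref{polynomials}) is the cleanest path, since it bypasses any approximation. Were one forced to derive weak convergence from the interval statement directly, the fix is a truncation/$\epsilon/3$ argument: for $\delta>0$ set $K=[0,1-\delta]$, approximate $f$ uniformly by a step function on the compact set $K$, on which $f$ \emph{is} uniformly continuous, and control the contribution of $[0,1)\setminus K$ using $\mu_{p,q}((1-\delta,1))\to\delta$ together with $\|f\|_{\infty}<\infty$.

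Finally, a purely nonstandard variant in the spirit of the rest of the paper: for $\eta\in{^{*}\mathcal{N}}$ infinite prime, the proof of Lemma~\ref{polynomials} shows the internal sequence $\{{^{*}p_{\eta}}(j)/\eta\ (mod\ 1)\}_{1\leq j\leq\eta}$ is weakly equidistributed in the sense of Definition~\ref{equidistributed2}, so Remark~\ref{evaluate} gives $\frac{1}{\eta}{^{*}\sum}_{1\leq j\leq\eta}f({^{*}p_{\eta}}(j)/\eta\ (mod\ 1))\simeq\int_{0}^{1}f\,d\mu$ for every standard $f\in L^{1}([0,1))$. As in the proof of Lemma~\ref{polynomials}, underflow transfers the resulting two-sided $\epsilon$-estimate down to all sufficiently large standard primes $q$, which is once more the statement that $\mu_{p,q}$ converges weakly to $\mu$.
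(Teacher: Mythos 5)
Your proposal is correct and follows the paper's route exactly: the paper's entire proof is that the claim ``follows immediately from the last part of Lemma~\ref{polynomials}'', i.e.\ the $L^{1}$ conclusion applied to bounded continuous test functions, which is precisely your main step. The additional Portmanteau and nonstandard variants you sketch are fine but not needed.
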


\begin{proof}
The proof follows immediately from the last part of Lemma \ref{polynomials}.
\end{proof}

\end{document}